\newcommand*\bigcdot{\mathpalette\bigcdot@{.5}}
\newcommand*\bigcdot@[2]{\mathbin{\vcenter{\hbox{\scalebox{#2}{$\m@th#1\odot$}}}}}
\newtheorem{theorem}{Theorem}
\newtheorem*{theorem*}{Theorem}
\newtheorem*{remark}{Remark}
\newtheorem*{claim*}{Claim}
\newtheorem*{remark*}{Remark}
\newtheorem*{lemma*}{Lemma}
\newtheorem{lemma}[theorem]{Lemma}
\newtheorem{fact}[theorem]{Fact}
\newtheorem{prop}[theorem]{Proposition}
\renewcommand{\mod}{\mathop {\mathsf{mod}}}
\newcommand{\R}{\mathbb{R}}
\newcommand{\lin}[1]{#1_\mathsf{lin}}
\newcommand{\Expect}[1]{\mathop{\mathbb{E}}\left
	[ #1 \right ]}
\newcommand{\Ex}[2]{\mathop{\mathbb{E}}\displaylimits_{#1}\left
	[ #2 \right ]}
\title{An Elementary Exposition of Pisier's inequality}
\begin{document}

\author[Iyer et al.]{Siddharth Iyer}
\address{School of Computer Science, University of Washington}
\email{sviyer97@gmail.com}
\author[]{Anup Rao}
\address{School of Computer Science, University of Washington}
\email{anuprao@cs.washington.edu}
\author[]{Victor Reis}
\address{School of Computer Science, University of Washington}
\email{voreis@cs.washington.edu}
\author[]{Thomas Rothvoss}
\address{School of Computer Science, University of Washington}
\email{rothvoss@uw.edu}
\author[]{Amir Yehudayoff}
\address{Department of Mathematics, Technion-IIT}
\email{amir.yehudayoff@gmail.com}

\begin{abstract}
Pisier's inequality is central in the study of normed
spaces and has important applications in geometry. 
We provide an elementary proof of this inequality,
which avoids some non-constructive steps from previous proofs.
Our goal is to make the inequality and its proof more accessible,
because we think they will find additional applications.
We demonstrate this with a new type of restriction
on the Fourier spectrum of bounded functions on the discrete cube.
\end{abstract}

\maketitle

\section{Introduction}



The \emph{Rademacher projection} is a method to linearize
functions from the discrete cube $\{\pm 1\}^n$ to the Euclidean space $\R^m$.
It is fundamental in the study of normed spaces~\cite{maurey1976series,artstein2015asymptotic}.
Pisier's inequality controls the operator norm of the Rademacher
projection~\cite{pisier1979espaces,pisier1980theoreme}.

This inequality has several important geometric applications. 
Most strikingly, if combined with a result of Figiel and Tomczak-Jaegermann~\cite{figiel1979projections} it implies the $MM^*$-estimate,
which says that in a certain average sense, symmetric convex bodies behave much more like ellipsoids than one could derive from John's classical theorem~\cite{john1948extremum}.
The $MM^*$-estimate is, in turn, a central piece in the proof of
Milman's QS-theorem~\cite{milman1985almost,milman1986inegalite,milman1988isomorphic}, which is one of the deepest results in convex geometry.

Pisier's original proof uses complex analysis and interpolation
(and provides additional information).
Bourgain and Milman found a different and more direct proof~\cite{bourgain1987new}.
Their proof relies on several deep results,
like the Hahn-Banach theorem, the Riesz representation theorem, and Bernstein's theorem from approximation theory.

The purpose of this note is to present an elementary and accessible
proof of Pisier's inequality.
Our proof is explicit and avoids the non-constructive 
part in the proof from~\cite{bourgain1987new}.

\subsection{The inequality}
The Rademacher projection is based on Fourier analysis.
The starting point is the space of functions from $\{\pm 1\}^n$ to $\R$.
The characters form an important (orthonormal) basis for this space.
The character that corresponds to the set $S \subseteq [n]$
is the map $\chi_S : \{\pm 1\}^n \to \R$ defined by
$$\chi_S(x) = \chi_S(x_1,x_2,\ldots,x_n) = \prod_{j \in S} x_j.$$
Every $f: \{\pm 1\}^n \to \R^m$ can be uniquely expressed as 
$$f(x) = \sum_{S \subseteq [n]} \hat{f}(S) \cdot \chi_S(x)$$ where the vectors $\hat{f}(S) \in \R^m$ are the Fourier coefficients of $f$. 
The linear part of $f$ is
$$\lin{f}(x) =\sum_{S \subseteq [n]: |S|=1} \hat{f}(S) \cdot \chi_S(x)
= \sum_{j=1}^n \hat{f}(\{j\}) \cdot x_j .$$
The Rademacher projection is the map $f \mapsto \lin{f}$.
Pisier's inequality gives an upper bound on its operator norm.

\begin{theorem*}[Pisier]
\label{thm:P}
There is a constant $C>0$ so that the following holds.
Let $\| \cdot \|$ be a norm on $\R^m$.
Let $X$ be uniformly distributed in $ \{\pm 1\}^n$.
Then
$$ \Expect{\|\lin{f}(X)\|^2}^{1/2} \leq C \log(m+1) \cdot \Expect{\|f(X)\|^2}^{1/2}.$$ 
\end{theorem*}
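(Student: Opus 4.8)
The plan is to prove Pisier's inequality by a duality/derivative argument that reduces the $\log m$ loss to a scalar estimate about the $L^1$-norm of derivatives of bounded functions. First I would rewrite the Rademacher projection using the standard identity $\lin{f}(x) = \E_Y\big[f(Y) \cdot \langle x, Y\rangle\big]$ where $Y$ is an independent uniform point of $\{\pm 1\}^n$ and $\langle x,Y\rangle = \sum_j x_j Y_j$; this is immediate from orthogonality of characters since only the degree-one part survives. By convexity (Jensen) this already gives the trivial bound with $\sqrt n$ in place of $\log(m+1)$, so the point is to do better. The key is to replace the linear kernel $\langle x,Y\rangle$ by a truncated exponential-type kernel: one looks for a function $g \colon \{\pm 1\}^n \times \{\pm 1\}^n \to \R$ (or a bounded-variation one-parameter family) such that $\E_Y[f(Y)\, g(x,Y)]$ still recovers $\lin f(x)$, while $g$ has small $L^\infty$ (or small total-variation) norm in a sense that interacts well with the norm on $\R^m$.

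Concretely, I would use the heat-semigroup / Ornstein--Uhlenbeck operator $T_\rho$ acting on functions of $Y$, write $\lin{f} = \frac{d}{d\rho}\big|_{\rho=0^+}$-type expression, and integrate: the identity $\lin f(x) = \int_0^1 \frac{1}{\rho}\,\E_Y\!\big[ (T_\rho f)_{\mathrm{lin-ish}}\big]\,d\rho$ is too lossy, so instead I would pass to the complexified/Gaussian picture where one has the classical fact that $\sum_j \partial_j$ applied to a polynomial can be represented as convolution against a kernel whose $L^1$ norm over the relevant range is $O(\log m)$ when restricted to functions bounded by $1$. The cleanest route, following the Bourgain--Milman strategy but made explicit, is: (i) reduce to showing $\E_X\|\lin f(X)\| \le C\log(m+1)\,\max_x \|f(x)\|$ for $\R^m$-valued $f$, by a standard homogenization/normalization of the norm; (ii) express $\lin f$ as an average of $f$ against vectors $w$ in the cube weighted by a signed measure $\mu$ with $\|\mu\|_{TV} = O(\log(m+1))$; (iii) conclude $\|\lin f(x)\| \le \int \|f(x\cdot w)\|\, d|\mu|(w) \le O(\log(m+1)) \max \|f\|$, then take expectations and use $\E\|\cdot\| \le \E\|\cdot\|^2{}^{1/2}$.

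The construction of the signed measure $\mu$ in step (ii) is where the real content lives, and this is where I expect the main obstacle. The naive choice from the linear identity has total variation $n$; one must exploit that we only care about the action on $\R^m$-valued functions that are bounded, so the measure only needs to reproduce $\lin f$ up to quantities controlled by $m$ rather than $n$. I would do this by a polynomial-approximation argument: find a univariate function $\phi$ with $\phi'(0)=1$, $\phi$ smooth, such that $\phi(\langle x,Y\rangle/\sqrt n)$-type kernels have $L^1$ mass $O(\log m)$ against degree-$\le m$ behavior — essentially a Bernstein-type / Markov-brothers estimate, but here realized constructively rather than via the abstract approximation-theoretic input of Bourgain--Milman. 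The dependence on $m$ (as opposed to $n$) enters because the vector-valued function $f$ on $m$-dimensional space, when one looks at it in a single direction picked out by the norm's dual, behaves like a scalar function whose "effective degree" one can truncate at $O(\log m)$ scales with only logarithmic cost.

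Finally I would assemble the pieces: fix the norm $\|\cdot\|$, use convexity to pass from the $L^2$ statement to an $L^\infty$-on-inputs statement, apply the signed-measure representation pointwise in $x$, bound by total variation times $\sup\|f\|$, and revert to the $L^2$ form by Cauchy--Schwarz (paying only a constant). The constant $C$ comes out of the explicit total-variation bound on $\mu$ and is absolute. I expect the write-up to spend most of its effort on the explicit kernel and its $O(\log(m+1))$ total-variation estimate, with everything else being routine manipulation of Fourier expansions and Jensen/Cauchy--Schwarz.
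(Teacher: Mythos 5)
Your high-level plan — replace the linear convolution kernel by a signed measure $\mu$ with small total variation and bound $\|f*\mu\|$ by $\|\mu\|_{TV}\,\|f\|$ — is the right strategy, and it matches the paper's. But two essential ingredients are missing, and one of your heuristics points in the wrong direction.

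First, a measure $\mu$ that reproduces $\lin f$ \emph{exactly} for all $f$ must have $\hat\mu(\{j\})=1$ and $\hat\mu(S)=0$ for $|S|\neq 1$, which forces $\mu = L = \sum_j x_j$ and hence $\|\mu\|_{TV} = \E|\sum_j X_j| = \Theta(\sqrt n)$, not $O(\log m)$. So there is necessarily an approximation error, and the proof has to split $L = P + (L-P)$, where the proxy $P$ matches $L$ on low-degree Fourier coefficients (up to degree $\ell$) and is within $2^{-\ell}\cdot O(\ell)$ of $L$ on every higher coefficient, at total-variation cost $O(\ell)$. Your proposal does gesture at an approximation ("up to quantities controlled by $m$"), but never says what controls the error.

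Second — and this is the real gap — your explanation of where $m$ enters is not correct. You suggest that in a "single direction picked out by the norm's dual" the function behaves like one of effective degree $O(\log m)$; that mechanism is not available, and no truncation of $f$ itself occurs. The paper's mechanism is entirely different: the error term $f*(L-P)$ is estimated in the \emph{Euclidean} norm via Parseval, giving a bound $\le \max_S|\widehat{L-P}(S)|\cdot\|f\|_{L^2(\ell_2)} \le 2^{-\ell}O(\ell)\,\|f\|_{L^2(\ell_2)}$, and then John's theorem converts between the given norm and the Euclidean one at a cost of $\sqrt m$. Choosing $\ell \approx \tfrac12\log m$ makes $\sqrt m / 2^\ell = O(1)$, so both the main term $O(\ell)$ and the error term are $O(\log m)$. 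Without invoking something like John's theorem (or the Banach--Mazur distance to Euclidean), nothing in your outline actually introduces $m$.

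Finally, you explicitly defer the construction of the kernel $\phi$, acknowledging it is the main content; the paper gives a short explicit formula, $\phi(\theta) = \tfrac{2\ell-1}{\ell}\sin(\ell\theta)/\sin^2\theta$ sampled uniformly on $4\ell$ equally-spaced angles (excluding $0,\pi$), whose Fourier-orthogonality and $\E|\phi|\le 4\ell$ are verified by elementary computations. This is precisely the piece the paper exists to make constructive; stating that a "Bernstein-type estimate" should produce it restates the problem rather than solving it.
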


%
%
%
The proof of Pisier's inequality from~\cite{bourgain1987new}
is based on the existence of a function $g: \{\pm 1\}^n \rightarrow \R$ that is nearly linear, yet has small $\ell_1$ norm. 
The existence of $g$ is proved in a non-constructive way.
We give an explicit and simple formula for such a function~$g$.

When $\| \cdot\|$ is the Euclidean norm,
the $C \log(m)$ term can be replaced by $1$,
because orthogonal projections do not increase the Euclidean norm.
Bourgain, however, showed that for general norms the $\log(m)$ factor is necessary~\cite{bourgain1984martingales}. 
Bourgain's construction is probabilistic. 
In Section~\ref{sec:B}, we describe a simple  explicit example, also based on Bourgain's idea, showing that $\frac{\log(m)}{\log \log (m)}$ factor is necessary.

There is a variant of Pisier's inequality for functions $f : \mathbb{R}^n \to \mathbb{R}^m$ where $X$
is Gaussian. While such a variant is useful for applications, it is a statement about an infinite-dimensional vector space of functions, which makes the proof more complicated. However, one can show 
that the variant on the discrete cube and the variant in Gaussian space are equivalent (see e.g.~\cite{artstein2015asymptotic}).

We conclude the introduction with one more application. 
Fourier analysis of Boolean functions is an important
area in computer science and mathematics with many applications (see the textbook~\cite{o2014analysis}).
A central goal is to identify properties of the Fourier spectrum
of Boolean or bounded function on the cube; see~\cite{dinur2006fourier,keller2012note}
and references within.
Pisier's inequality implies the following
restriction on the Fourier spectrum.
There is a constant $c>0$ so that
for every $f : \{\pm 1\}^n \to [-1,1]$,
$$\log(\|\hat{f}\|_0) \geq c \sum_{j=1}^n |\hat{f}(\{j\})|,$$
where $\|\hat{f}\|_0$ is the sparsity of $\hat{f}$;
i.e., the number of sets $S \subseteq [n]$
so that $\hat{f}(S) \neq 0$.
The proof of this inequality
and its sharpness can be deduced from Section~\ref{sec:B}.

\section{Preliminaries}

Convolution is a powerful tool when there is an underlying group structure.
Here the group is the cube $\{\pm 1\}^n$
with the operation $x \odot z = (x_1 z_1, \dotsc , x_n z_n)$. 
The convolution of a (vector-valued) 
function $f: \{\pm 1\}^n \rightarrow \R^m$ and 
a (scalar-valued) function $g:\{\pm 1\}^n \rightarrow \R$ is 
the function $f*g: \{\pm 1\}^n \to \R^m$ defined by
\begin{align*}
 f*g(x) = \Ex{Z}{g(Z) \cdot f(x \odot Z)}
\end{align*}
where $Z$ is uniformly random in $\{\pm 1\}^n$. 
We list some basic properties of convolution.

\begin{fact}
\label{fact:lin}
If $T:\R^m \to \R^m$ is a linear map then $T(f*g) = T(f) * g$. 
\end{fact}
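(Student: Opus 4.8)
The plan is to unwind the definition of convolution and push the linear map $T$ through the (finite) average over $Z$. Concretely, I would start from
$$T(f*g)(x) = T\!\left(\Ex{Z}{g(Z)\cdot f(x\odot Z)}\right),$$
and observe that the expectation on the right is simply the average of $2^n$ vectors in $\R^m$, one for each value $z \in \{\pm 1\}^n$ taken by $Z$; no limiting or continuity considerations are involved.

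Next I would invoke the two defining features of a linear map $T$. Additivity lets $T$ commute with a finite sum, and homogeneity lets $T$ commute with multiplication by the scalar $g(z)$. Combining them yields
$$T\!\left(\Ex{Z}{g(Z)\cdot f(x\odot Z)}\right) = \Ex{Z}{T\!\big(g(Z)\cdot f(x\odot Z)\big)} = \Ex{Z}{g(Z)\cdot T\big(f(x\odot Z)\big)}.$$
Since $T(f)$ denotes, by definition, the function $x \mapsto T(f(x))$, the rightmost expression is exactly $\big(T(f)*g\big)(x)$. As $x$ was arbitrary, this gives the claimed identity of functions $T(f*g) = T(f)*g$.

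I do not expect any real obstacle: the content is entirely the interchange of a linear map with a finite average, i.e., linearity of $T$ applied to a finite linear combination of vectors. The only point worth stating carefully is that the expectation here is a genuine finite average over $\{\pm 1\}^n$, so this interchange is valid without any hypothesis beyond linearity of $T$.
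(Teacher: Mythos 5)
Your proof is correct, and it is exactly the argument implicit in the paper, which states Fact~\ref{fact:lin} without proof: the convolution is a finite average over $Z \in \{\pm 1\}^n$, so additivity and homogeneity of $T$ let it pass inside the expectation and through the scalar $g(Z)$, giving $T(f*g)(x) = (T(f)*g)(x)$ pointwise. Nothing further is needed.
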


\begin{fact}
\label{fact:f*gS}
$\widehat{f*g}(S) = \hat{g}(S) \cdot \hat{f}(S) $ for every $S \subseteq [n]$.
\end{fact}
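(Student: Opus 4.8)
The plan is to compute $\widehat{f*g}(S)$ directly from the Fourier inversion formula $\hat h(S) = \Ex{X}{h(X)\chi_S(X)}$, which holds coordinate-wise for vector-valued $h$ as a consequence of the orthonormality of the characters. Unwinding the definition of convolution,
\[
\widehat{f*g}(S) = \Ex{X}{\Big(\Ex{Z}{g(Z)\cdot f(X\odot Z)}\Big)\chi_S(X)} = \Ex{X,Z}{g(Z)\cdot f(X\odot Z)\cdot\chi_S(X)},
\]
where $X$ and $Z$ are independent and uniform on $\{\pm 1\}^n$.

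First I would change variables, setting $Y = X\odot Z$. For each fixed $Z$ the map $X\mapsto X\odot Z$ is a bijection of $\{\pm1\}^n$ that preserves the uniform distribution, with inverse $Y\mapsto Y\odot Z$ (because $Z\odot Z$ is the all-ones vector); hence $(X,Z)$ and $(Y\odot Z,\,Z)$ are equidistributed. Next I would use that characters are multiplicative with respect to $\odot$: $\chi_S(Y\odot Z)=\prod_{j\in S}Y_jZ_j=\chi_S(Y)\,\chi_S(Z)$. Substituting and using independence of $Y$ and $Z$,
\[
\widehat{f*g}(S) = \Ex{Y,Z}{g(Z)\,\chi_S(Z)\cdot f(Y)\,\chi_S(Y)} = \Ex{Z}{g(Z)\,\chi_S(Z)}\cdot\Ex{Y}{f(Y)\,\chi_S(Y)} = \hat g(S)\cdot\hat f(S),
\]
where $\hat g(S)$ is a scalar and $\hat f(S)$ a vector, so the product is the claimed vector in $\R^m$.

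The argument is short; the only point needing a little care is the change of variables — namely that $\odot$-translation by a fixed cube element is measure-preserving — together with the bookkeeping of which factors are scalar- versus vector-valued (so that pulling $g(Z)$ and the characters out of expectations is legitimate). An alternative route that avoids the substitution is to expand $f$ and $g$ in the character basis, observe that convolution is bilinear in the pair $(f,g)$, and reduce everything to the single identity $\chi_S*\chi_T=\mathbf{1}[S=T]\,\chi_S$, which again is just multiplicativity of characters together with $\Ex{Z}{\chi_U(Z)}=\mathbf{1}[U=\emptyset]$; but the direct computation above seems cleanest.
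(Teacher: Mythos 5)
Your proof is correct, but it takes a genuinely different route from the paper. The paper expands both $f$ and $g$ in the character basis, substitutes into the definition of convolution, uses $\chi_T(x \odot Z) = \chi_T(x)\chi_T(Z)$, and then invokes orthonormality of the characters to collapse the double sum, reading off all Fourier coefficients of $f*g$ at once. You instead go coefficient-by-coefficient: you extract $\widehat{f*g}(S)$ via the inversion formula $\hat h(S) = \Ex{X}{h(X)\chi_S(X)}$, perform the change of variables $Y = X \odot Z$ (justified by the fact that $\odot$-translation preserves the uniform measure), split $\chi_S(Y \odot Z) = \chi_S(Y)\chi_S(Z)$, and factor the resulting expectation by independence. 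Both arguments are elementary and rest on the same two ingredients (multiplicativity of characters and orthogonality, the latter appearing in your version implicitly through the inversion formula). Your version is the classical ``convolution theorem'' proof from harmonic analysis on groups, emphasizing translation-invariance of the measure; the paper's version is pure Fourier-series bookkeeping and has the side benefit of exhibiting the entire Fourier expansion of $f*g$ in one line rather than a single coefficient. Your handling of the scalar-versus-vector bookkeeping is correct, and the observation that the inversion formula applies coordinate-wise to vector-valued $h$ is exactly the right remark to make.
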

\begin{proof}
 \begin{align*}
 f*g(x) & = \Expect{g(Z) \cdot f(x \odot Z)} \\ 
 & =\Expect{ \sum_{S} \hat{g}(S) \chi_S(Z) \cdot \sum_{T} \hat{f}(T) \chi_T(x \odot Z)} \\ 
  & =\Expect{ \sum_{S} \hat{g}(S) \chi_S(Z) \cdot \sum_{T} \hat{f}(T) \chi_T(x)\chi_T(Z)} \\ 
 &  =\sum_{S} \hat{g}(S) \hat{f}(S) \chi_S( x),
 \end{align*}
 where the last equality uses linearity of expectation and
 the orthonormality of the characters:
 \begin{align*}
 	\Expect{\chi_S(Z) \chi_T(Z)} = \begin{cases}
 	1 & \text{if $S=T$,}\\
 	0 & \text{otherwise.}
 	\end{cases} 
 \end{align*}
\end{proof}

\begin{fact} \label{fact:l1} For any norm $\| \cdot \|$, 
	$$\Expect{\|f*g(X) \|^2}^{1/2} \leq \Expect{|g(X)|} \cdot \Expect{\|f(X) \|^2}^{1/2}.$$
\end{fact}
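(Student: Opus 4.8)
The plan is to view $f*g(x) = \Ex{Z}{g(Z)\cdot f(x\odot Z)}$ as an average of the vectors $g(Z)\cdot f(x\odot Z)$ over the uniformly random $Z$, and to push the norm inside this average using convexity. Concretely, for each fixed $x$ the triangle inequality (or equivalently Jensen's inequality for the convex function $\|\cdot\|$) gives
\begin{align*}
\|f*g(x)\| \;\le\; \Ex{Z}{\,|g(Z)|\cdot \|f(x\odot Z)\|\,}.
\end{align*}
So the first step reduces the claim to bounding $\Expect{\big(\Ex{Z}{|g(Z)|\cdot\|f(x\odot Z)\|}\big)^2}^{1/2}$, where the outer expectation is over $X$ uniform and independent of $Z$.

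Next I would apply the Cauchy--Schwarz inequality to the inner expectation over $Z$, splitting $|g(Z)| = |g(Z)|^{1/2}\cdot\big(|g(Z)|^{1/2}\|f(x\odot Z)\|\big)$, to obtain
\begin{align*}
\Big(\Ex{Z}{|g(Z)|\cdot\|f(x\odot Z)\|}\Big)^2 \;\le\; \Ex{Z}{|g(Z)|}\cdot \Ex{Z}{|g(Z)|\cdot\|f(x\odot Z)\|^2}.
\end{align*}
The factor $\Ex{Z}{|g(Z)|} = \Expect{|g(X)|}$ is exactly the quantity we want out front (and it does not depend on $x$). Taking the expectation over $X$ of the remaining factor, I would use Tonelli/Fubini to swap the order of the two expectations and exploit the key symmetry: for $Z$ fixed, as $X$ ranges uniformly over $\{\pm 1\}^n$, so does $X\odot Z$. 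Hence $\Ex{X}{\|f(X\odot Z)\|^2} = \Expect{\|f(X)\|^2}$ for every $Z$, and therefore
\begin{align*}
\Ex{X}{\Ex{Z}{|g(Z)|\cdot\|f(X\odot Z)\|^2}} \;=\; \Ex{Z}{|g(Z)|\cdot \Ex{X}{\|f(X\odot Z)\|^2}} \;=\; \Expect{|g(X)|}\cdot\Expect{\|f(X)\|^2}.
\end{align*}
Combining the three displays and taking square roots yields the claimed bound.

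I do not expect a serious obstacle here; the only point that needs a little care is making sure the convexity step in the first display is justified for a general norm (it is, since $\|\cdot\|$ is convex and the average over $Z$ is a finite convex combination) and that the measure-theoretic swaps are legitimate (they are, as all sums are finite over $\{\pm 1\}^n$). A cosmetic alternative would be to absorb both the convexity step and the Cauchy--Schwarz step into a single application of the integral form of Minkowski's inequality for the $L^2(X)$ norm, $\big\|\Ex{Z}{g(Z)f(\cdot\odot Z)}\big\|_{L^2(X)} \le \Ex{Z}{|g(Z)|\cdot\|f(\cdot\odot Z)\|_{L^2(X)}}$, followed by the shift-invariance $\|f(\cdot\odot Z)\|_{L^2(X)} = \|f\|_{L^2(X)}$; but the elementary Cauchy--Schwarz route above is probably the cleanest to write out in full.
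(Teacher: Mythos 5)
Your proof is correct and follows essentially the same route as the paper's: convexity of the norm to move $\|\cdot\|$ inside the $Z$-average, Cauchy--Schwarz with the split $|g(Z)| = |g(Z)|^{1/2}\cdot|g(Z)|^{1/2}$, and the shift-invariance of the uniform measure on $\{\pm 1\}^n$ under $X \mapsto X\odot Z$. The only difference is cosmetic: you spell out the Cauchy--Schwarz split and the Fubini swap more explicitly than the paper does.
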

\begin{proof}
	\begin{align*}
	\Expect{\|f*g(X) \|^2} & = \Ex{X}{\|\Ex{Z} {g(Z) \cdot f(X \odot Z)} \|^2}	\\ & \leq \Ex{X}{\big(\Ex{Z}{|g(Z)| \cdot  \|f(X \odot Z)\|}\big)^2},
	\end{align*}
	where the inequality follows from the convexity of the norm $\| \cdot \|$.  By the Cauchy-Schwarz inequality, we get
	\begin{align*}
	& \leq \Ex{X}{\Ex{Z}{|g(Z)|} \cdot \Ex{Z'}{|g(Z')|\cdot  \|f(X \odot Z')\|^2}} \\
	&= \Ex{Z}{|g(Z)|} \cdot \Ex{Z'}{|g(Z')| \cdot \Ex{X}  {\|f(X)\|^2}} \\
	&= \big(\Ex{Z}{|g(Z)|}\big)^2 \cdot \Ex{X}  {\|f(X)\|^2}. \qedhere
	\end{align*}
\end{proof}
%

%

\section{An Overview of the Proof}

The linear part $\lin{f}$ of $f$ can be expressed as the convolution
of $f$ with the linear function $L = \sum_{j=1}^n x_j$;
see Fact~\ref{fact:f*gS}.
In order to analyze the norm of $\lin{f} = f * L$,
we use an auxiliary function $P$ which serves as a proxy for $L$.
We call the function $P$ the {\em linear proxy},
and it depends on a parameter $\ell$ that will be set to
be $\approx \log(m)$.

\begin{lemma}
For every odd $\ell > 0$, there is
$P : \{\pm 1\}^n \to \R$ so that the following hold.
First, $P$ is close to $L$: for all $S \subseteq [n]$,
$$|\widehat{P-L}(S)| \leq \frac{8\ell}{2^\ell}.$$ 
Second, $P$ has small $\ell_1$ norm:
$$\Expect{|P(X)|} \leq 8 \ell.$$

\end{lemma}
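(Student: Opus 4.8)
The plan is to write down an explicit $P$ and then verify the two displayed bounds by a direct Fourier computation. It helps to first see what shape $P$ is forced to have. The first requirement pins $\widehat P(\{j\})$ to within $8\ell/2^\ell$ of $1$ for every $j$, so
$$\Expect{P(X)^2}=\sum_{S}\widehat P(S)^2\ \ge\ \sum_{j=1}^n\widehat P(\{j\})^2\ \ge\ n\Bigl(1-\tfrac{8\ell}{2^\ell}\Bigr)^2.$$
Thus $P$ has $L^2$-norm of order $\sqrt n$, while the second requirement asks that its $L^1$-norm be only $O(\ell)$. Consequently $P$ cannot be $L$ itself, nor a bounded function, nor a low-degree polynomial (a low-degree polynomial cannot be so spiky, by hypercontractivity): it must essentially vanish on most of the cube and reach values of order $n/\ell$ on a set of tiny measure. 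So the task is to manufacture such a spike whose Fourier spectrum is nonetheless extremely close to that of $L$.

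The candidate I would use is a signed combination of Riesz-type products. For $a\in(0,1)$ put
$$R_a(x)=\frac{1}{2a}\Bigl(\prod_{j=1}^n(1+ax_j)-\prod_{j=1}^n(1-ax_j)\Bigr).$$
Expanding the products and invoking orthonormality of characters (the computation behind Fact~\ref{fact:f*gS}) gives $\widehat{R_a}(S)=a^{|S|-1}$ when $|S|$ is odd and $\widehat{R_a}(S)=0$ when $|S|$ is even; in particular $\widehat{R_a}$ matches $\widehat L$ on level $1$ and on every even level, and $\widehat{R_a-L}(S)=a^{|S|-1}$ on the odd levels $|S|\ge 3$. Moreover $R_a$ is odd, so its even-level coefficients vanish for free — which is why only odd levels ever need attention, and where the hypothesis that $\ell$ is odd sits naturally, the largest level one must kill being exactly $\ell$. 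I would then take $P=\sum_{t}c_t R_{a_t}$ over $\tfrac{\ell+1}{2}$ parameters $a_t\in(0,\tfrac12]$ with the $c_t$ determined by the linear system $\sum_t c_t=1$ and $\sum_t c_t a_t^{2j}=0$ for $j=1,\dots,\tfrac{\ell-1}{2}$, a Vandermonde system in the variables $a_t^2$. By construction $\widehat P(\{j\})=1$, while $\widehat P(S)=0$ on the odd levels $3,5,\dots,\ell$ and on all even levels; on an odd level $|S|>\ell$ one has $\widehat P(S)=\sum_t c_t a_t^{|S|-1}$, which is at most $\bigl(\sum_t|c_t|\bigr)\,2^{-\ell}$.

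It then remains to check the two bounds. For the Fourier estimate, $\widehat{P-L}(S)$ vanishes unless $|S|$ is odd and exceeds $\ell$, where it is at most $\bigl(\sum_t|c_t|\bigr)\,2^{-\ell}$; so one must choose the nodes $a_t$ — Chebyshev-like points, placed just close enough to $0$ that $0$ barely falls outside their range — so that $\sum_t|c_t|=O(\ell)$, which makes the whole bound $O(\ell)/2^\ell$ and, crucially, uniform in $n$. For the $L^1$ estimate, each of $\prod_j(1+a_tx_j)$ and $\prod_j(1-a_tx_j)$ is nonnegative with expectation $1$, so $\Expect{|R_{a_t}(X)|}\le\tfrac{1}{2a_t}(1+1)=1/a_t$, whence $\Expect{|P(X)|}\le\sum_t|c_t|/a_t$; keeping this $O(\ell)$ forces the smallest node to have size $\Omega(1/\ell^2)$, so that $1/a_t=O(\ell)$, while still demanding that the interpolation weights $c_t$ associated with those small nodes stay controlled. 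The crux — the only genuinely substantial part — is exactly this tension: shrinking the $a_t$ drives the high-level Fourier coefficients down but inflates the factors $1/a_t$ in the $L^1$ bound, and one has to exhibit $O(\ell)$ node placements for which $\sum_t|c_t|$ and $\sum_t|c_t|/a_t$ are simultaneously $O(\ell)$. I would expect this to reduce to a concrete estimate on Lagrange interpolation weights at Chebyshev-type nodes evaluated a hair outside the interpolation interval, together with the trivial geometric tail bound $a_t^{|S|-1}\le 2^{-\ell}$ for the uncontrolled levels. Once these estimates are run with the right absolute constants, both displayed inequalities follow.
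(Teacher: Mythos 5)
Your overall architecture is quite close in spirit to the paper's: both represent the proxy $P$ as a signed mixture of Riesz-type products $\prod_j(1+a\,x_j)$, choosing weights so that the level-$1$ Fourier coefficient is $1$, levels $2$ through $\ell$ vanish, and higher levels are geometrically small because $a\le 1/2$, while the $L^1$ bound comes from the nonnegativity and unit mean of each factor. (In the paper, $a=\tfrac12\sin\theta$, $\theta$ is drawn uniformly from $4\ell-2$ roots-of-unity angles, and the weight is $\phi(\theta)$; your antisymmetrization $R_a=\tfrac{1}{2a}(\Pi_{+a}-\Pi_{-a})$ is the same parity trick, just executed by pairing $\pm a$ rather than by a sign-symmetric distribution on $\theta$.)

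However, there is a genuine gap, and you have named it yourself: the entire difficulty is to exhibit node placements $a_1,\dots,a_{(\ell+1)/2}\in(0,\tfrac12]$ for which the Vandermonde/Lagrange weights satisfy both $\sum_t|c_t|=O(\ell)$ and $\sum_t|c_t|/a_t=O(\ell)$, and your proposal stops exactly there, with ``I would expect this to reduce to a concrete estimate on Lagrange interpolation weights.'' This is not a verification, and it is not obviously true with natural choices. For instance, with $a_t=t/\ell$ (so $b_t=t^2/\ell^2$, $T=(\ell+1)/2$) one can compute the Lagrange weights at $b=0$ exactly: $|c_t|=2\,\binom{T}{t}/\binom{T+t}{t}\le 2\,e^{-t^2/(2T)}$, which gives $\sum_t|c_t|=O(\sqrt{\ell})$ but $\sum_t|c_t|/a_t=2\ell\sum_t e^{-t^2/(2T)}/t=\Theta(\ell\log\ell)$ — off by a $\log\ell$ factor, which after setting $\ell\approx\tfrac12\log m$ would degrade Pisier to $C\log m\log\log m$. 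Whether Chebyshev-spaced nodes (in $b$) close this gap requires a delicate estimate on the Lebesgue function slightly outside the interpolation interval, and you have not carried it out. The paper sidesteps exactly this hard quantitative step by an explicit closed-form choice, $\phi(\theta)=\tfrac{2\ell-1}{\ell}\sin(\ell\theta)/\sin^2\theta$ on the equally spaced angles $\Gamma$: the level cancellations then become exact roots-of-unity identities $\sum_{\theta\in\Gamma}e^{ia\theta}=0$ for $a\not\equiv 0\pmod{4\ell}$, and the $L^1$ bound $\Expect{|\phi(\theta)|}\le 4\ell$ is a concrete convergent series, with no linear-algebra inversion to control.
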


Let us explain how to prove Pisier's inequality using the linear proxy $P$.
The convexity of norms allows to split the bound
to two terms:
\begin{align*}
\Expect{\|\lin{f}(X)\|^2}^{1/2}  
& = \Expect{\|f*L(X)\|^2}^{1/2} \\
&= \Expect{\|f*P(X) + f*(L-P)(X)\|^2}^{1/2}\\
& \leq \Expect{\|f*P(X) \|^2}^{1/2} + \Expect{\| f*(L-P)(X)\|^2}^{1/2}.
\end{align*}
Bound each of the two terms separately. 
To bound the first term, apply Fact \ref{fact:l1} and 
use the choice of $P$,
\begin{align*}
\Expect{\|f*P(X) \|^2}^{1/2} \leq \Expect{|P(Z)|} \cdot \Expect{\|f(X) \|^2}^{1/2} 
\leq 8 \ell \Expect{\|f(X) \|^2}^{1/2}.
\end{align*}
To bound the second term, we use John's theorem, which is classical
and we do not prove here. John's theorem states that
there is an invertible linear map $T : \R^m \to \R^m$
so that for every $x \in \R^m$,
$$\|T(x)\|_2 \leq \|x\| \leq \sqrt{m} \cdot \|T(x)\|_2.$$
Using $T$ we can switch between $\|\cdot\|$
and $\|\cdot \|_2$:
\begin{align*}
\Expect{\| f*(L-P)(X)\|^2}^{1/2} 
&  \leq \sqrt{m} \cdot \Expect{\| T(f*(L-P)(X)) \|_2^2}^{1/2}  \\
&  = \sqrt{m} \cdot \Expect{\| T(f)*(L-P)(X)\|_2^2}^{1/2} \\
& =  \sqrt{m} \cdot \sqrt{\sum_S \|\widehat{T(f)}(S)\|_2^2 \cdot (\widehat{L-P}(S))^2} \\
& \leq \frac{8\ell \sqrt{m}}{2^\ell} \cdot \sqrt{\sum_S \|\widehat{T(f)}(S)\|_2^2} \\
&= \frac{8\ell \sqrt{m}}{2^\ell} \cdot \Expect{\| T(f(X))\|_2^2}^{1/2}\\
& \leq \frac{8\ell \sqrt{m}}{2^\ell} \cdot \Expect{\| f(X)\|^2}^{1/2}.
\end{align*}
Putting it together,
\begin{align*}
\Expect{\|\lin{f}(X)\|^2}^{1/2}  
& \leq 8\ell \Big(1+ \frac{\sqrt{m}}{2^\ell} \Big) \Expect{\|f(X) \|^2}^{1/2}.
\end{align*}
Setting $\ell$ to be the smallest odd that
is larger than $\tfrac{1}{2} \log(m)$,
the proof is complete.

\begin{remark}
Pisier's inequality is more general than stated in Theorem~\ref{thm:P}.
The Banach-Mazur distance of the norm $\| \cdot \|$
from the Euclidean norm $\|\cdot\|_2$ is
$$D = \inf \{d \in \R: \exists T \in \mathsf{GL}_m \ \forall x \in \R^m \ \|T(x)\|_2 \leq \|x\| \leq d\cdot \|T(x)\|_2\},$$
where $\mathsf{GL}_m$ is the group of invertible linear transformations from $\R^m$ to itself. 
John's theorem states that always $D \leq \sqrt{m}$.
The above argument proves that, more generally, we can replace
the $C \log(m+1)$ term by $C \log(D+1)$.
\end{remark}

\section{Constructing the linear proxy}

The structure of the linear proxy $P$ we construct is similar
to the linear proxy from~\cite{bourgain1987new}.
However, the existence of the linear proxy in~\cite{bourgain1987new}
is proved in a non-constructive way.
Here we provide a simple and explicit formula for $P$. 
The main piece in the construction is 
the following proposition.

\begin{prop}
Let $\ell >0$ be odd and
let
$$\phi(\theta) = \frac{2\ell-1}{ \ell } \cdot \frac{\sin(\ell \theta)}{\sin^2(\theta)}.$$
There is a finitely supported distribution on $\theta \in [0,2\pi]$ such that 
\begin{align*}
\Expect{\phi(\theta) \cdot \sin^k(\theta)} = \begin{cases}
1 & \text{if $k = 1$,}\\
0 & \text{if $k = 0,2,3,\dotsc,\ell$,}
\end{cases}
\end{align*}
and
$$\Expect{|\phi(\theta)|} \leq 4\ell$$
\end{prop}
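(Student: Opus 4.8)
The idea is to realize the linear functional $p \mapsto p'(0)/p(?)$—more precisely, the moments against $\sin^k\theta$—by a carefully chosen measure, using that $\phi(\theta)\sin^2(\theta) = \frac{2\ell-1}{\ell}\sin(\ell\theta)$ is a trigonometric polynomial of degree $\ell$. First I would change variables to $t = \sin\theta$ and recognize that the requirement is to find a signed (actually we want a probability) measure $\mu$ on $\theta$ so that $\int \phi(\theta)\sin^k\theta\, d\mu$ picks out $k=1$. Since $\phi(\theta)\sin^k(\theta) = \frac{2\ell-1}{\ell}\sin(\ell\theta)\sin^{k-2}(\theta)$ for $k\geq 2$, and the Chebyshev-like identity expands $\sin(\ell\theta)$ in terms of $\sin\theta$ (because $\ell$ is odd, $\sin(\ell\theta) = \sin\theta \cdot U_{\ell-1}(\cos\theta)$ is an odd polynomial in $\sin\theta$ of degree $\ell$), all the relevant integrands $\phi(\theta)\sin^k(\theta)$ for $k=0,1,\dots,\ell$ are polynomials in $s := \sin\theta$ of degree at most $\ell$. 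So the whole problem becomes: find a finitely supported distribution on $s \in [-1,1]$ (pushed forward from $\theta$) under which a specific degree-$\ell$ polynomial $\Phi(s)$ satisfies $\E[\Phi(s) s^k] = \mathbf{1}[k=1]$ for $k=0,\dots,\ell$, with $\E|\Phi(s)| \le 4\ell$.

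**Choosing the measure.** I would use Gauss–Chebyshev quadrature. The natural nodes are $\theta_j = \frac{(2j-1)\pi}{2\ell}$ or the roots of an appropriate Chebyshev polynomial, so that $\sin(\ell\theta_j) = \pm 1$ — this is exactly why $\phi$ has the factor $\sin(\ell\theta)$, making $\phi$ bounded in absolute value by $\frac{2\ell-1}{\ell}\cdot\frac{1}{\sin^2\theta_j}$ at the nodes and keeping the sum of weights under control. Concretely, take the distribution that puts mass proportional to $\sin^2(\theta_j)$ (or some similar weight forcing the $\ell_1$ bound) on each node $\theta_j$; then $\phi(\theta_j)\cdot(\text{weight}_j) \propto \pm(2\ell-1)/\ell$, a bounded quantity, and summing over the $\approx \ell$ nodes gives $\E|\phi| = O(\ell)$. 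The moment conditions $\E[\phi(\theta)\sin^k\theta] = \mathbf{1}[k=1]$ then follow from the exactness of Gauss-type quadrature: the function $\phi(\theta)\sin^k(\theta)$ is a trig polynomial of degree $\le \ell+ (k-2) \le 2\ell-2$ for $k \le \ell$, and an $\ell$-point Chebyshev quadrature rule integrates such polynomials exactly against the Chebyshev weight; one then checks that the exact integral $\int_0^\pi \phi(\theta)\sin^k(\theta)\,\frac{d\theta}{\sin\theta}$ (or the appropriately weighted version matching the chosen point masses) equals $1$ for $k=1$ and $0$ otherwise, which reduces to evaluating $\int_0^\pi \sin(\ell\theta)\sin^{k-2}(\theta)\csc(\theta)\,d\theta$ — a standard orthogonality computation since $\sin(\ell\theta)/\sin(\theta)$ is the Dirichlet-type kernel orthogonal to low-degree even polynomials in $\cos\theta$, with the single surviving term calibrated by the constant $\frac{2\ell-1}{\ell}$.

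**The main obstacle.** The genuinely delicate point is the simultaneous control of \emph{both} the moment conditions and the $\ell_1$ bound: a quadrature rule that integrates degree-$2\ell-2$ polynomials exactly typically needs $\sim\ell$ nodes, and near the endpoints $\theta\to 0$ the weight $1/\sin^2\theta$ in $\phi$ blows up, so I must choose nodes bounded away from $0$ and weights that exactly cancel the $\sin^2$ in the denominator of $\phi$ while still summing to $1$ and still giving an exact rule. Verifying that the specific choice (Chebyshev nodes $\theta_j = \frac{(2j-1)\pi}{2\ell}$ with point-mass proportional to something like $\sin^2\theta_j \cdot |\text{correction}|$) simultaneously (i) normalizes to a probability distribution, (ii) makes $\phi\cdot\text{(mass)}$ have absolute value $\le \frac{2\ell-1}{\ell}\cdot\frac{1}{\ell}$-ish per node so the total is $\le 4\ell$, and (iii) still reproduces the moments exactly, is where the real bookkeeping lies; the constant $\frac{2\ell-1}{\ell}$ and the bound $4\ell$ (rather than, say, $2\ell$) are precisely the slack needed to absorb the endpoint behavior. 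I expect the cleanest route is to write down $\phi(\theta)\sin^k(\theta)$ explicitly as $\frac{2\ell-1}{\ell}\sin(\ell\theta)\sin^{k-2}(\theta)$, expand $\sin(\ell\theta)=\sum_{i} c_i \sin^{2i+1}(\theta)$ using the odd-degree Chebyshev expansion, and then invoke a single quadrature identity of the form $\E[\sin^{2a+1}(\theta)]=\ $ (explicit) under the chosen measure, reducing everything to one lemma about moments of $\sin$ under Gauss–Chebyshev weights.
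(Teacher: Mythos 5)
Your structural intuition is in the right direction: a finitely supported distribution, trig-polynomial cancellation, and nodes where $\sin(\ell\theta)=\pm1$ so that $\phi$ stays controlled. But as stated the plan has two concrete problems. First, $\ell$ Chebyshev-type nodes on $(0,\pi)$ cannot satisfy the $k=0$ constraint $\E[\phi]=0$. The only structural reason $\E[\phi]=0$ can hold is the antisymmetry $\phi(2\pi-\theta)=-\phi(\theta)$, and that forces the support to be symmetric under $\theta\mapsto 2\pi-\theta$; a one-sided set of nodes in $(0,\pi)$ gives $\sum_j w_j\,\phi(\theta_j)\ne 0$ in general (try $\ell=3$: $\phi>0$ at all three of your nodes). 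The paper sidesteps this by taking the support inside the full circle $[0,2\pi]$. Second, the invocation of Gauss--Chebyshev exactness does not match the problem: exactness of a quadrature rule says a discrete sum equals a continuous weighted integral, but here there is no integral to match—the distribution itself defines the moments $\E[\phi\sin^k\theta]$, so what is needed is a direct evaluation of a discrete sum, not a comparison to $\int\phi\sin^k\theta\,d\theta$. (And in fact the natural continuous integral fails: $\int_0^\pi\sin(\ell\theta)\,d\theta=2/\ell\ne 0$, so the $k=2$ constraint would not follow from exactness even if it applied.) Your weight heuristics also don't quite close: masses $\propto\sin^2\theta_j$ make $\phi\cdot w_j$ constant and the total $\E|\phi|=O(1)$, but then the moment constraints break (and an $O(1)$ bound would falsely make Pisier's constant $O(1)$); uniform masses give the right $O(\ell)$ bound, but then the simultaneous moment verification has to be redone, which is the heart of the matter.

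The paper's construction resolves all of this at once: it takes the uniform distribution on the $4\ell-2$ equally spaced angles $\Gamma\setminus\{0,\pi\}$, where $\Gamma=\{2\pi k/(4\ell):0\le k<4\ell\}$. Antisymmetry under $\theta\mapsto 2\pi-\theta$ kills $k=0$. For $1\le k\le\ell$, one writes $\phi\sin^k\theta=\frac{2\ell-1}{\ell}\sin(\ell\theta)\sin^{k-2}\theta$ (for $k\ge2$) or $\frac{2\ell-1}{\ell}\,\frac{\sin(\ell\theta)}{\sin\theta}$ (for $k=1$), expands in complex exponentials, and uses the exact discrete Fourier identity $\sum_{\theta\in\Gamma}e^{ia\theta}=4\ell\cdot\mathbf{1}[a\equiv 0\bmod 4\ell]$; all phases appearing are bounded by $2\ell-2<4\ell$ in magnitude and nonzero, so everything cancels except the $a=0$ term for $k=1$, which the constant $\tfrac{2\ell-1}{\ell}$ is tuned to normalize to $1$. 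The $\ell_1$ bound then follows from $|\sin(\ell\theta)|\le1$ and $\sin\gamma\ge 2\gamma/\pi$ on $[0,\pi/2]$, giving $\E|\phi|\le\frac{2}{\ell}\sum_{j\ge1}\ell^2/j^2\le 4\ell$. The discrete Fourier identity thus plays the role you hoped Gauss--Chebyshev exactness would play, but it is applied directly to the discrete sum, is exact with no residual integral, and simultaneously certifies all the moment constraints; this is the step your outline flagged as ``where the real bookkeeping lies,'' and it is genuinely different from quadrature exactness.
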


Using the proposition, the linear proxy is defined as
$$ P(x) = 2 \cdot \Ex{\theta}{\phi(\theta) \cdot \prod_{j=1}^n \Big (1+ \frac{\sin(\theta) \cdot x_j }{2} \Big)}.$$
The properties of $P$ readily follow.
To prove that $P$ is close to linear, open the product
and use linearity of expectation:
\begin{align*}
P(x) 
& =  \sum_{S \subseteq [n]}  2 \Ex{\theta}{\phi(\theta) 
\frac{\sin^{|S|}(\theta)}{2^{|S|}} } \cdot \chi_S(x).
\end{align*}
This is the Fourier representation of $P$.
The first property of $\phi$ implies that $\hat{P}(S)=0$ when $|S|=0,2,3,\dotsc,\ell$, and $\hat{P}(S)=1$ when $|S|=1$. 
When $|S|>\ell$, the second property of $\phi$ implies
\begin{align*}
|\hat{P}(S)|  \leq \frac{2}{2^{|S|}} \cdot \Expect{|\phi(\theta)|} \leq \frac{8\ell}{2^{\ell}}.
\end{align*}
Bound the $\ell_1$ norm of $P$ by
\begin{align*}
\Expect{|P(X)|} 
 &\leq 2\cdot \Expect{ | \phi(\theta)| \cdot \Big| \prod_{j=1}^n \Big (1+ \frac{\sin(\theta) \cdot X_j }{2} \Big) \Big |} \\
 &= 2 \cdot \Expect{ | \phi(\theta)| \cdot \prod_{j=1}^n \Big (1+ \frac{\sin(\theta) \cdot X_j }{2} \Big)} \\ 
& = 2\cdot \Expect{ | \phi(\theta)|} \leq 8 \ell,
\end{align*}
because $1+ \tfrac{\sin(\theta) \cdot X_j }{2} \geq 0$, and 
$\Expect{X_j} = 0$.

\subsection{Construction of $\phi$}
The cancellations below are based on the following simple fact.
Let $\Gamma$ denote the $4\ell$ equally spaced angles: 
$$\Gamma = \Big\{0,\frac{2\pi}{4\ell}, \frac{2 \cdot 2\pi}{4\ell}, \dotsc,  \frac{(4\ell-1) \cdot 2\pi}{4\ell}\Big\}.$$ 
For any integer $a$, since $\sum_{\theta \in \Gamma} e^{i a \theta} =  e^{i a \cdot \frac{ 2\pi}{4\ell}} \cdot \sum_{\theta \in \Gamma} e^{ia \theta}$,
we have 
\begin{align} \label{eqn:fourier}
\sum_{\theta \in \Gamma} e^{i a \theta} =  \begin{cases}4\ell & \text{if $a = 0 \mod 4\ell$,}\\
0 & \text{otherwise.}
\end{cases}
\end{align}

The distribution on $\theta$ is uniform in the set $\Gamma \setminus \{0, \pi\}$.
It remains to prove the stated properties of $\phi$ one-by-one.
For $k=0$,
since $\phi(\theta) = - \phi(2 \pi - \theta)$, 
$$\Expect{\phi(\theta) \sin^0(\theta)}=0.$$ For $k=1$, use the identity $\sin(\theta) = e^{-i\theta} \cdot \frac{e^{2i\theta} - 1}{2i}$:
\begin{align*}
\Expect{\phi(\theta) \cdot \sin(\theta)} & = \frac{1}{4\ell-2} \cdot \frac{2\ell-1}{\ell} \cdot \sum_{\theta \in \Gamma\setminus \{0,\pi\}} \frac{\sin(\ell \theta)}{\sin(\theta)}\\
& = \frac{1}{2\ell} \cdot \sum_{\theta \in \Gamma\setminus \{0,\pi\}} e^{-i (\ell-1) \theta} \cdot \frac{e^{i 2\ell \theta} - 1}{e^{i2 \theta} -1}\\
& = \frac{1}{2\ell} \cdot \sum_{\theta \in \Gamma\setminus \{0,\pi\}} e^{-i (\ell-1) \theta} + e^{-i(\ell -3) \theta} + \dotsc + e^{i(\ell-1)\theta}.
\end{align*}
Because $\ell$ is odd, when $\theta \in \{0,\pi\}$, we have $e^{-i(\ell-1)\theta} + \dotsc + e^{i (\ell-1)\theta}=\ell$. 
So, using \eqref{eqn:fourier}, we get
\begin{align*}
 = \frac{1}{2\ell}\cdot \Big(-2\ell+\sum_{\theta \in \Gamma} e^{-i (\ell-1) \theta} + e^{-i(\ell -3) \theta} + \dotsc + e^{i(\ell-1)\theta} \Big) = \frac{1}{2\ell} \cdot (-2\ell +4\ell)= 1.
\end{align*}
When $1 <  k \leq \ell$, because $\sin(0) = \sin(\pi) = 0$, we have
\begin{align*}
\Expect{\phi(\theta) \cdot \sin^k(\theta)} & =\frac{1}{4\ell-2} \cdot \frac{2\ell-1}{\ell} \cdot \sum_{\theta \in \Gamma\setminus \{0,\pi\}} \sin(\ell \theta) \cdot \sin^{k-2}(\theta)\\
&=\frac{1}{2\ell} \cdot  \sum_{\theta \in \Gamma} \sin(\ell \theta) \cdot \sin^{k-2}(\theta)\\
&=\frac{1}{2\ell} \cdot  \sum_{\theta \in \Gamma} \Big(\frac{e^{i\ell \theta} - e^{-i \ell \theta}}{2i} \Big)  \cdot \Big(\frac{e^{i\theta} - e^{-i \theta}}{2i} \Big)^{k-2} = 0,
\end{align*}
since every phase appearing here after opening the
parenthesis is non-zero modulo $4\ell$.

Finally, bound the $\ell_1$ norm of $\phi$: by the symmetry of $\theta$,  
\begin{align*}
\Expect{|\phi(\theta)|}   
 \leq 4 \cdot  \frac{1}{4\ell-2}  \cdot \frac{2\ell-1}{\ell}  \cdot  \sum_{j=1}^{\ell} \Big | \frac{1 }{\sin^2 (2 \pi j/(4\ell))} \Big| \leq \frac{2}{\ell}\cdot \sum_{j=1}^{\infty} \Big | \frac{\ell^2 }{j^2} \Big | \leq 4\ell,
\end{align*}
where we used the inequality $\sin(\gamma) \geq \gamma/(\pi/2)$, which is valid when $0 \leq \gamma \leq \pi/2$.

\section{A Lower Bound}
\label{sec:B}
Bourgain showed that Pisier's inequality is sharp~\cite{bourgain1984martingales}.
His example is non-explicit because it uses the probabilistic method.
Here we give a simple and explicit example showing that a loss of $\frac{\log m}{\log \log m}$ is necessary. 
The main technical ingredient is the following construction:
\begin{theorem} \label{thm:Ex}
  For any $n \in \mathbb{N}$, there is a function $F : \{ -1,1\}^n \to \mathbb{R}$ with the following properties:
 \begin{enumerate}
  \item[(A)]  $\|F\|_\infty \leq O(1)$.
  \item[(B)]  $\hat{F}(\{j\}) = \frac{1}{\sqrt{n}}$ for all $j \in [n]$.
  \item[(C)] 
   $\|\hat{F}\|_0 \leq 2^{O(\sqrt{n} \log(n))}$.
  \end{enumerate}
\end{theorem}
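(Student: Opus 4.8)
\textbf{Proof proposal for Theorem~\ref{thm:Ex}.}

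The plan is to build $F$ as a suitable average of characters, chosen so that the linear Fourier coefficients all equal $\tfrac{1}{\sqrt n}$ while the support stays controlled. A natural first attempt is $F(x) = \tfrac{1}{\sqrt n}\sum_{j=1}^n x_j$, which has the correct linear part but $\|F\|_\infty = \sqrt n$, violating (A). To fix the $\ell_\infty$ bound, I would pass through a bounded function of a single real variable: take a univariate function $h:\R \to \R$ with $\|h\|_\infty = O(1)$ whose ``slope at the origin'' picks out exactly the linear term, and set $F(x) = h\!\left(\tfrac{1}{\sqrt n}\sum_j x_j\right)$. Concretely, one expects $h$ to be something like $h(t) = \sin(t)$ or a low-degree polynomial approximation of a clipped linear function, so that $F(x) = \sin\!\left(\tfrac{1}{\sqrt n}\sum_j x_j\right)$; expanding $\sin$ via its Taylor/Fourier series and using $\sin(t) = \Im(e^{it})$, the product $\prod_j e^{i x_j/\sqrt n}$ factors, and the degree-$1$ Fourier coefficient of $F$ comes out proportional to $\tfrac{1}{\sqrt n}\cos^{n-1}(1/\sqrt n) \cdot (\text{const}) \approx \tfrac{1}{\sqrt n}$ up to a universal constant (which can be absorbed by rescaling $h$). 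The key point is that $F$ has an exactly computable Fourier expansion because it is a symmetric function depending only on $\sum_j x_j$.

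For property (C), the sparsity bound, I would exploit symmetry: since $F$ depends only on $s = \sum_j x_j \in \{-n,-n+2,\dots,n\}$, its Fourier transform is supported on symmetric functions, so $\hat F(S)$ depends only on $|S| = k$, and $\hat F(S) = c_k$ for some coefficients $c_0,\dots,c_n$. The sparsity is $\|\hat F\|_0 = \sum_{k : c_k \neq 0} \binom{n}{k}$. If $F = \sin(\tfrac{1}{\sqrt n}\sum_j x_j)$ exactly, then \emph{all} $c_k$ with $k$ odd could be nonzero, giving $\|\hat F\|_0 \approx 2^{n-1}$, which is far too large. The remedy is to replace $\sin$ by a \emph{polynomial} $h$ of degree $d = O(\sqrt n \log n)$ that still has $\|h\|_\infty = O(1)$ on the relevant interval $[-\sqrt n, \sqrt n]$ and the right derivative at $0$; then $c_k = 0$ for all $k > d$, so
\begin{align*}
\|\hat F\|_0 \;=\; \sum_{k=0}^{d} \binom{n}{k} \;\leq\; (d+1)\binom{n}{d} \;\leq\; (d+1)\Big(\frac{en}{d}\Big)^{d} \;=\; 2^{O(d \log n)} \;=\; 2^{O(\sqrt n \log n)},
\end{align*}
which is exactly (B)'s companion bound (C). So the construction is $F(x) = h\!\left(\tfrac{1}{\sqrt n}\sum_{j=1}^n x_j\right)$ for a carefully chosen low-degree polynomial $h$.

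The main obstacle is constructing the univariate polynomial $h$: I need $h$ of degree $O(\sqrt n \log n)$ with $\|h\|_{\infty,[-\sqrt n,\sqrt n]} = O(1)$, $h'(0)$ equal to a fixed positive constant, and — crucially — the induced coefficient $\hat F(\{j\})$ must come out to be \emph{exactly} $\tfrac{1}{\sqrt n}$ for all $j$, which by symmetry it automatically will be \emph{up to a scalar}, leaving only a normalization to fix at the end. The natural candidate is a truncation of the Chebyshev/Taylor expansion of $\sin$: since $\sin$ is approximated to error $<1$ on $[-\sqrt n,\sqrt n]$ by its Taylor polynomial of degree $O(\sqrt n \,/\log(\sqrt n) \cdot \log n)=O(\sqrt n)$ — more carefully, one needs degree $d$ with $(\sqrt n)^d/d! < 1$, i.e. $d = \Theta(\sqrt n)$ suffices, so even $O(\sqrt n)$ works and the $\log n$ is slack — so take $h$ to be this truncation (rescaled). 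One then checks (i) $\|h\|_{\infty,[-\sqrt n,\sqrt n]} \leq \|\sin\|_\infty + 1 = O(1)$, hence $\|F\|_\infty = O(1)$, giving (A); (ii) the linear coefficient: writing $h(t) = \sum_{r \text{ odd}}^{d} a_r t^r$, one has $\hat F(\{j\}) = \sum_r a_r \cdot n^{-r/2}\cdot \E[x_1 (\sum_i x_i)^{r}] / $ (the appropriate combinatorial factor), and the leading $r=1$ term contributes the dominant $\tfrac{1}{\sqrt n}$ while the higher terms are lower order — after which one rescales $h$ by the resulting universal constant so that (B) holds exactly; and (iii) (C) as computed above. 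The bookkeeping in (ii), tracking $\E\big[x_1(\sum_i x_i)^r\big]$ across odd $r \le d$ and verifying the sum is a nonzero constant times $\sqrt n$ (so the normalization is legitimate), is the one place requiring genuine care, but it is a finite combinatorial computation with no conceptual difficulty.
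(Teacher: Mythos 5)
Your plan — take a bounded symmetric function of $s=\sum_j x_j$ and truncate to low degree — is in the right spirit, and matches the paper's closing remark (and a removed appendix) that proves Theorem~\ref{thm:Ex} via $T_{\lfloor\sqrt n\rfloor}(s/n)$. But there is a genuine gap in step (ii), and it is at exactly the place you flag as requiring care but then dismiss as routine.

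You claim that, in $\hat F(\{j\}) = \sum_{r\text{ odd}} a_r\, n^{-r/2}\,\E\bigl[x_j s^r\bigr]$, ``the leading $r=1$ term contributes the dominant $\tfrac{1}{\sqrt n}$ while the higher terms are lower order.'' This is false. For $h$ the Taylor truncation of $\sin$, the $r=1$ term contributes $\tfrac{1}{\sqrt n}$, but the $r=3$ term contributes $-\tfrac{1}{3!\,n^{3/2}}\E[x_j s^3] = -\tfrac{3n-2}{6n^{3/2}} \approx -\tfrac{1}{2\sqrt n}$, which is of the \emph{same} order, not lower order; likewise $r=5$ gives $\approx \tfrac{1}{8\sqrt n}$, and so on. All odd $r$ contribute at order $\Theta(n^{-1/2})$, and one must actually sum the series and show the total is bounded away from zero. (It is: the sum converges to essentially $\E[Z\sin Z]/\sqrt n = e^{-1/2}/\sqrt n$ for $Z\sim N(0,1)$, but establishing this for all $n$ with a truncated polynomial, and controlling the truncation error in the coefficient sum, is a nontrivial calculation — it is precisely the content of the lemma about $\mathsf{Lin}(n,k)$ in the paper's removed appendix.) Without this, your normalization step is unjustified: a priori the pre-rescaling linear coefficient could be $o(n^{-1/2})$, in which case rescaling to make (B) exact would blow up $\|F\|_\infty$ and destroy (A).

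The paper avoids this entire issue by working with a different representation of essentially the same function. Instead of $\sin(s/\sqrt n) = \mathrm{Im}\bigl(\prod_j e^{ix_j/\sqrt n}\bigr)$, it takes $H(x)=\mathrm{Im}\bigl(\prod_j(1+\tfrac{i}{\sqrt n}x_j)\bigr)$. Because each factor is affine in $x_j$ with coefficients $1$ and $\tfrac{i}{\sqrt n}$ (rather than $\cos(1/\sqrt n)$ and $i\sin(1/\sqrt n)$), the Fourier expansion of $H$ is immediate: $\hat H(S) = \mathrm{Im}\bigl((i/\sqrt n)^{|S|}\bigr)$, so $\hat H(\{j\}) = 1/\sqrt n$ \emph{exactly} and $|\hat H(S)| \le n^{-|S|/2}$ with no summation over Taylor terms and no normalization constant to verify. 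The truncation is then done on the Fourier side (drop all $S$ with $|S| > 3\sqrt n$), which preserves the degree-one coefficients verbatim, making (B) automatic, and (A) follows from the clean decay $|\hat H(S)| \le n^{-|S|/2}$. If you want to push your version through, you should replace the truncated-Taylor construction of $h$ with this product form, or else carry out the $\mathsf{Lin}(n,k)$ bookkeeping honestly; ``the $r=1$ term dominates'' is not a valid shortcut.
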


Our example follows the same outline as Bourgain's approach. 
Bourgain proved a stronger theorem
showing that there is a function satisfying (A) and (B) but
its Fourier sparsity in (C) is at most $2^{O(\sqrt{n})}$.
His construction starts by considering a simple function $H$ 
satisfying (A) and (B) but not (C).
He then carefully uses randomness to eliminate most
of the Fourier coefficients in $H$ while maintaining (A) and (B), 
and improving the sparsity.
We observe that it is enough to truncate $H$
to prove the theorem above.

Before proving the theorem, let us see how it yields
a limitation to Pisier's inequality.
Let $\mathcal{F} = \{S \subseteq [n] : \hat{F}(S) \neq \emptyset\}$, and
consider the function $f:\{\pm 1\}^{n} \rightarrow \R^{\mathcal{F}}$ 
defined by  
$$(f(x))_S = \hat{F}(S) \chi_S(x)$$
for each $S \in \mathcal{F}$.
Define a norm on $\R^{\mathcal{F}}$ as follows.
Every $v \in \R^{\mathcal{F}}$ corresponds to the function 
$g = g_v : \{\pm 1\}^{n} \to \R$ that is defined by $\hat{g}(S) = v_S$.
The norm of $v$ is defined to be
$$\|v\| = \|g\|_\infty = \max \{|g(z)| : z \in \{\pm 1\}^n\}.$$
It follows that for every $x \in \{ -1,1\}^n$,
$$\|f(x)\| = \|F\|_\infty \leq O(1)$$
and that
$$\|\lin{f}(x)\| \geq n \cdot \frac{1}{\sqrt{n}} \geq \Omega\Big( \frac{\log |\mathcal{F}|}{ \log \log |\mathcal{F}|}\Big).$$
It remains to prove the theorem.

\begin{proof}[Proof of Theorem~\ref{thm:Ex}]
First, we define a function $H : \{ -1,1\}^n \to \R$ by
  \[
  H(x) := \textrm{Im}\Big(  \prod_{j=1}^n \Big(1 + \frac{i}{\sqrt{n}} x_j \Big)\Big) = \sum_{S \subseteq [n]} \textrm{Im}\Big( \Big(\frac{i}{\sqrt{n}}\Big)^{|S|} \Big) \cdot \chi_S(x) ,
\]
where $\textrm{Im}$ denotes the imaginary part of a complex number. 
It follows that
$$\|H\|_\infty 
\leq \Big|1 + \frac{i}{\sqrt{n}}\Big|^n = \Big(\sqrt{1+\frac{1}{n}}\Big)^n \leq 3.$$
It also follows that
\begin{align}
\label{eqn:H1}
\hat{H}(\{j\}) = \frac{1}{\sqrt{n}}
\end{align}
for all $j \in [n]$ 
and 
$$|\hat{H}(S)| \leq n^{-|S|/2}$$ for all $S \subseteq [n]$. 

The function $F$ is obtained from $H$ by truncating the high frequencies.
Let  
\[
    F(x) := \sum_{S \in \mathcal{F}} \hat{H}(S) \cdot \chi_S(x),
  \]
  where $\mathcal{F} := \{ S \subseteq [n] : |S| \leq 3\sqrt{n} \}$.  
Property (A) of $F$ can be justified as follows.
  For every $x \in \{ -1,1\}^n$,
  \begin{eqnarray*}
    |H(x) - F(x)| &=& \Big|\sum_{S \subseteq [n]} (\hat{H}(S) - \hat{F}(S)) \cdot \chi_S(x)\Big|
                      \leq \sum_{S \not \in \mathcal{F}}  \underbrace{|\hat{H}(S)|}_{\leq n^{-|S|/2}} \cdot \underbrace{|\chi_S(x)|}_{\leq 1}                  \\  &\leq& \sum_{k>3\sqrt{n}} {n \choose k} n^{-k/2} \leq 
                                                                                                                                                                    \sum_{k>3\sqrt{n}}\Big(\frac{e\sqrt{n}}{k}\Big)^k 
                                                                                                                                                                  \leq 2^{-\Omega(\sqrt{n})} .
  \end{eqnarray*}
So, indeed
$\|F\|_\infty \leq \|H\|_\infty + \|H - F\|_\infty \leq O(1)$.
  Property (B) of $F$ holds by \eqref{eqn:H1}.
  Property (C) holds because $\|\hat{F}\|_0 \leq |\mathcal{F}| 
  \leq 2^{O(\log(n) \sqrt{n})}$.
\end{proof}
\begin{remark}
Bourgain used random sampling to sparsify the Fourier spectrum of $H$ and get sparsity $2^{O(\sqrt{n})}$.
Bourgain used Khinchine's inequality to analyze the sparsity
of the random function.
One can perform a similar analysis using more standard concentration
bounds. 
\end{remark}

\begin{remark}
Theorem~\ref{thm:Ex} can be proved with
$$F(x) = T_k(\tfrac{x_1+\ldots+x_n}{n})$$
as well, where $k = \lfloor \sqrt{n} \rfloor$
and $T_k$ is the $k$'th Chebyshev polynomial of the first kind.
\end{remark}

\section*{Acknowledgements}
We thank Mrigank Arora and Emanuel Milman for useful comments.

\bibliographystyle{abbrv}
\bibliography{pisier}
\end{document}